\documentclass{article}
\pdfoutput=1
\usepackage[utf8]{inputenc}
\usepackage[newzealand]{babel}
\usepackage{amsmath}
\usepackage{amsthm}
\usepackage[author-year]{amsrefs}
\usepackage{amssymb}
\usepackage{tikz}
\usetikzlibrary{arrows,calc,intersections,shapes}
\usepackage{verbatim}
\usepackage{graphicx}
\usepackage[subrefformat=parens]{subcaption}
\usepackage{booktabs}
\usepackage{aliascnt}
\usepackage{enumitem}
\usepackage[breaklinks=true,pdfborder={0 0 0},pdfdisplaydoctitle=true,
pdfpagelabels=true]{hyperref}
\hypersetup{pdfauthor={Stefan Kranich},pdftitle={Generation of real
algebraic loci via complex detours}}

\usepackage{microtype}

\theoremstyle{plain}
\newtheorem{theorem}{Theorem}[section]

\newcommand{\mynewtheorem}[2]{
    \newaliascnt{#1}{theorem}
    \newtheorem{#1}[#1]{#2}
    \aliascntresetthe{#1}
    \expandafter\def\csname #1autorefname\endcsname{#2}
}

\mynewtheorem{lemma}{Lemma}
\theoremstyle{definition}
\mynewtheorem{assumption}{Assumption}
\mynewtheorem{construction}{Construction}
\mynewtheorem{algorithm}{Algorithm}
\mynewtheorem{conjecture}{Conjecture}
\mynewtheorem{definition}{Definition}
\mynewtheorem{example}{Example}
\mynewtheorem{problem}{Problem}
\theoremstyle{remark}
\mynewtheorem{remark}{Remark}
\makeatletter
\let\c@figure\@undefined
\let\c@table\@undefined
\makeatother
\newaliascnt{figure}{theorem}
\aliascntresetthe{figure}
\numberwithin{figure}{section}
\newaliascnt{table}{theorem}
\aliascntresetthe{table}
\numberwithin{table}{section}

\begin{document}

\title{Generation of real algebraic loci via complex~detours}
\author{Stefan Kranich\footnote{Zentrum Mathematik (M10), Technische
Universität München, 85747~Garching, Germany; E-mail address:
\url{kranich@ma.tum.de}}}

\maketitle

\begin{abstract}
\noindent We discuss the locus generation algorithm used by the dynamic
geometry software Cinderella, and how it uses complex detours to resolve
singularities.
We show that the algorithm is independent of the clockwise or
anticlockwise orientation of its complex detours.
We conjecture that the algorithm terminates if it takes small
enough complex detours and small enough steps on every complex detour.
Moreover, we introduce a variant of the algorithm that possibly
generates entire real connected components of real algebraic loci.
Several examples illustrate its use for organic generation of real
algebraic loci.
Another example shows how we can apply the algorithm to simulate
mechanical linkages. Apparently, the use of complex detours produces
physically reasonable motion of such linkages.
\end{abstract}

\section{Introduction}

A locus is a set of points in the plane with a common geometric
property. For example, a circle is the set of points whose distance from
the centre of the circle equals the radius of the circle.
Here we focus on loci that are closed curves generated by dynamic
geometric constructions.

The generation of loci has a very long history. We roughly follow the
exposition of~\cite{BrieskornKnoerrer1986}*{Chapter~1}. Many ancient Greek
mathematicians studied geometric constructions. Some classical
problems, e.g.\ squaring the circle, duplication of the cube (Delian
problem), and angle trisection, resisted any attempt of solution using
only compass and straightedge.
Unlike the ancient Greeks, we know that these problems cannot be solved
by compass and straightedge constructions.\footnote{
\ocite{Wantzel1837} showed that using compass and straightedge, we can only
construct points with coordinates in a field extension of $\mathbb{Q}$
obtained by adjoining all roots of degree a power of two of rational
numbers. He proved that duplication of the cube and angle trisection
reduce to solving irreducible cubic equations, which is therefore
impossible using only compass and straightedge.

Squaring the circle reduces to constructing $\pi$. In the nineteenth
century, it was known that $\pi$ is irrational. Mathematicians wondered
whether it could be a solution of an equation of degree a power of two
that was potentially solvable using only compass and straightedge by
iterative construction of square roots. \ocite{Lindemann1882} settled
the case, showing that $\pi$ is transcendental, i.e.\ not a root of any
algebraic equation.}

After many failed attempts using only compass and straightedge, the
ancient Greeks sought other means of solving these problems.
Menaechmus (c.~350~BC) found that duplication of the cube could be
performed using compass, straight\-edge and conic sections.
Other plane algebraic curves classically used as devices for duplication
of the cube and angle trisection include the cissoid of Diocles
(c.~180~BC) and the conchoid of Nicomedes (c.~180~BC).
Diocles and Nicomedes solved duplication of the cube and angle trisection by
intersecting these curves with other geometric elements. In order to
find the intersections, it is essential that we can construct not only
some points but whole arcs of these curves in a continuous manner.

To that end, we can use dynamic compass and straightedge constructions. We
move one element of the construction, e.g.\ we rotate a line about a
point or slide a point on a line/circle, and follow a point constructed
from the moving element as it traces a curve. If the construction steps
can be described algebraically, then the resulting curve is a real
connected component of a plane algebraic curve.
Such constructions for various curves were known to the ancient
Greeks. Newton, who investigated this technique, called it `organic
generation'.

We can use dynamic geometry software to carry out such constructions. Some
applications allow us to select a line through a point or a point on a
line/circle, which shall be the moving element (mover), and a dependent
point, which shall be followed as it traces a curve (tracer).
The software then attempts to automatically generate the real connected
component of the real plane algebraic curve that is the locus of the
tracer under movement of the mover.

Depending on the underlying model of geometry (and depending on the
algorithm used), the software may fail to generate the entire real
connected component of the real algebraic curve.
\ocite{Kortenkamp1999}*{Section~6.2, esp.~Theorem~6.8} shows that dynamic
geometry systems with geometric primitives like `intersection of a circle
with a line' or `angle bisector of two lines' cannot be both determined
and continuous: If we require that we can determine a unique instance
of a construction for every possible position of its free elements
(movable elements), then we cannot expect that its dependent elements
always move continuously.

The reason behind this is that `intersection of a circle and a line' or
`angle bisector of two lines' are ambiguous geometric operations. In
general, a circle and a line have two (possibly complex) intersections. In
general, two lines have two angle bisectors, which are perpendicular to
each other. Consider an angle bisector of two (unoriented) lines $a$,
$b$ through a common point $P$.
If we rotate $a$ about $P$ and the angle bisector
moves continuously, then the angle bisector rotates at half the angular
velocity. When $a$ has rotated by an angle of $\pi$, it has reached its
initial position. At the same time, the angle bisector has rotated by an
angle of $\pi/2$ and has become perpendicular to its initial position. We
have moved continuously from one possible instance of the construction
to the other. Therefore the dynamic geometric system cannot be determined.

For that reason, we may sometimes observe dependent elements jump
unexpectedly in most dynamic geometry software.
If a tracer jumps during locus generation, the algorithm may miss part
of its real algebraic locus.

As we have seen, if we want to avoid jumping elements, we need to take all
possible output elements of ambiguous geometric operations into account.
We parameterize the motion of the mover using a time parameter $t$. There
may be some points in time when part of the construction degenerates
or when two possible choices of a dependent element coincide and become
indistinguishable. We call such points in time singularities. For example,
the two intersections of a circle and a line coincide when we move the
centre of the circle such that it merely touches the line.
If we can somehow avoid singularities, then we can always distinguish 
all possible choices. Hence, we may be able to determine the instance
that produces a continuous evolution of the construction.

In order to avoid jumping elements in their dynamic geometry software
Cinderella~\cite{KortenkampRichterGebert2006}, Kortenkamp
and Richter-Gebert introduced the paradigm of `complex
detours' \citelist{\cite{Kortenkamp1999}*{esp. Chapter~7}
\cite{KortenkampRichterGebert2001b}
\cite{KortenkampRichterGebert2002}}:
We do not let the time parameter run along the real time axis where we
often encounter singularities. Instead, we embed the real time axis
as the real axis of the complex plane. Between two points on the real
time axis, we let the time parameter take a detour through the complex
plane to circumvent singularities. Thus we can avoid singularities with
high probability.

The locus generation algorithm of Cinderella exploits this principle;
it has been working very well in practice for many years, but from a
theoretical perspective, the algorithm has not been examined in more
detail yet~\cite{RichterGebert2014}.
In what follows, we analyze the algorithm and some assumptions on which
it is based.
Moreover, we introduce a variant of the algorithm that might always
generate an entire real connected component of a real algebraic locus.

\section{A locus generation algorithm}

The locus generation algorithm addresses the following problem: Consider a
geometric construction. Choose an element of the construction whose
movement is constrained to one dimension, e.g.\ a line through a
point, a point on a line, or a point on a circle. We call this element
`mover'. Choose a point of the construction whose position
depends on the position of the mover. We call this point `tracer'.
Suppose the tracer can be constructed from the mover (and possibly
further elements) using only geometric operations that have an algebraic
representation. Then movement of the mover causes the tracer to move on
a real plane algebraic curve.
The goal of the locus generation algorithm is to automatically produce
the locus of the tracer under movement of the mover.

The following locus generation algorithm has two variants. Variant~A
is essentially equivalent to the locus generation algorithm
implemented in Cinderella. Variant~B possibly generates an
entire real connected component of a real algebraic locus
(see~\autoref{cnj:connected-component}).

\begin{algorithm}[locus generation algorithm]
\label{alg:locus-generation}
We rationally parameterize the motion of the mover using a time parameter $t$.
We assume that we start at a non-singular initial time $t_0 \in
\mathbb{R}$ when the position of the tracer is real-valued.
\begin{enumerate}
\item Let $s := 1$, $t := t_0$.
\item Let $t' := t + s \cdot \varepsilon$ for some small step size
$\varepsilon > 0$.
\end{enumerate}
Consider the circle $c$ in the complex plane that has the segment
between $t$ and $t'$ as its diameter. We let the time parameter take a
complex detour on this circle.
\begin{enumerate}[resume]
\item Let $t$ take a small step on circle $c$ in anticlockwise
direction.
\item Update the position of the mover according to its parameterization
in $t$.
\item Update the rest of the construction. In doing so, for ambiguous
elements, use proximity between old and new positions of the elements
to determine the right instance of the construction.
\item \begin{description}[leftmargin=0pt]
\item[Variant A] If $t$ is real-valued and the tracer has real-valued
coordinates, the tracer has reached a point of the real plane algebraic
locus. It may happen that $t$ ends up in its initial position on circle
$c$. In this case, we invert the direction of movement of the mover. To
that end, we set $s := -s$. (Note that this affects only the choice of
sampling points of time parameter $t$; the anticlockwise orientation of
complex detours remains the same.)
If $t = t_0$, $s = 1$, and the tracer has reached its initial position again,
we stop. Otherwise, we go to step~2.

If $t$ is not real-valued or the tracer does not have real-valued
coordinates, we go to step~3.
\item[Variant B] If the tracer has real-valued coordinates, it has
reached a point of the real algebraic locus. It may happen that $t$ is
not real-valued or that it ends up in its initial position on circle
$c$. In any case, we update the direction of movement of the mover.
To that end, we set \[s = \frac{t - a}{|t - a|},\] where $a$ denotes
the centre of circle $c$.
If $t = t_0$ and the tracer has reached its initial position again,
in the initial direction of movement of the time parameter, we stop.
Otherwise, we go to step~2.

If the tracer does not have real-valued coordinates, we go to step~3.
\end{description}
\end{enumerate}
\end{algorithm}

\begin{remark}
\label{rem:tracing-by-proximity}
\autoref{alg:locus-generation} evaluates the geometric construction only
at discrete points in time along a complex detour.
At every sampling point, the
algorithm should select the right instance of the construction, i.e.\
the instance that yields a continuous evolution of the construction. More
precisely, we want that the coordinates of the elements of the construction
are locally analytic functions of time; the algorithm should select
the instance that results from analytic continuation of the coordinate
functions along the complex detours.

If the coordinates are analytic functions of time, then in particular
they are continuous. Hence, if we take a small step in time along the
complex detour, the coordinates change only little. If the step is
small enough then we can select the right instance (in the above sense)
by proximity, i.e.\ we choose every ambiguous element so that its
coordinates change least.
\end{remark}

\noindent Besides the assumption that we start at a non-singular
initial time when the position of the tracer is real-valued,
\autoref{alg:locus-generation} is based on the following assumptions.
(This list of assumptions may not be complete;
\autoref{alg:locus-generation} may be based on further implicit
assumptions.)

\begin{assumption}
\label{ass:at-most-one-singularity}
We assume that we always choose $\varepsilon$ small enough so that
the complex detours wind around at most one ramification point of a
coordinate of the tracer, and only around ramification points at which
the position of the tracer is real-valued.
Thus we preclude that the tracer jumps from one real arc of the real
algebraic locus to another due to a complex detour around a ramification
point of a coordinate at which the tracer has a complex position close
to the real algebraic locus.
\end{assumption}

\begin{assumption}
\label{ass:tracing-by-proximity}
We assume that the steps we take on the complex detour are small enough
so that we can choose the right instance of the construction by proximity
(see~\autoref{rem:tracing-by-proximity}).
\end{assumption}

\begin{assumption}
\label{ass:no-miss}
We assume that the steps we take on the complex detours are small enough
so that we do not miss a real position of the tracer.
\end{assumption}

\begin{remark}
While we usually satisfy \autoref{ass:tracing-by-proximity} in practice,
it is very difficult to guarantee that it is satisfied in general. We
cannot go into details here but refer
to~\cite{KortenkampRichterGebert2002}.
\end{remark}

\begin{remark}
While we usually satisfy \autoref{ass:at-most-one-singularity} in
practice, it is not clear how to guarantee that it is satisfied in
general.
\end{remark}

\begin{remark}
For Variant~A of \autoref{alg:locus-generation}, we can
satisfy~\autoref{ass:no-miss}, if we ensure that time parameter $t$
always attains the real values on the complex detours.
For Variant~B of \autoref{alg:locus-generation}, we must additionally
determine complex points in time on the complex detours when the coordinates
of the tracer are real-valued. We can try to approximate these points in
time by bisection if the imaginary part of a coordinate of the tracer
changes sign or has small absolute value; this may be really difficult
in practice.
\end{remark}

\begin{remark}
Since a real algebraic locus can contain points at infinity, it may sometimes
be advantageous to describe the plane algebraic curve containing the real
algebraic locus by a homogeneous equation $f(x, y, z) = 0$. Thus we can
express points of the locus at infinity using finite coordinates ${(x,
y, z)}^\top$.
We can homogenize an affine equation $f (x, y) = 0$ of total degree $n$
by plugging in $x = x/z$, $y = y/z$, and multiplying by $z^n$. If we
set $z = 1$, we return to the affine equation.
\end{remark}

\begin{remark}
If we use homogeneous coordinates ${(x, y, z)}^\top$ to describe the
position of the tracer, we require that none of $x$, $y$, and $z$ become
infinite; if necessary, we constantly normalize ${(x, y, z)}^\top$ to
avoid that one of the coordinates grows too much.
\end{remark}

\section{Orientation of complex detours}

In Step~3 of \autoref{alg:locus-generation}, we specify that time
parameter $t$ takes complex detours along circles in the complex plane
in anticlockwise direction.
In principle, $t$ can also take complex detours in clockwise direction.
Are there constructions where the generated locus changes depending on
the clockwise or anticlockwise orientation of the complex detours?

In order to answer this question, we make some assumptions on the
real algebraic loci to which we apply our locus generation algorithm,
without loss of generality. Let \[\mathcal{C}\colon f (x,y) = 0\] denote
a plane algebraic curve containing such a locus.

\begin{assumption}
\label{ass:irreducible}
We assume that $f(x,y)$ is irreducible. An analytic transition from one
irreducible component of $f(x,y)$ to another is impossible. Therefore,
if $f(x,y)$ is reducible, we can without loss of generality consider
the irreducible component containing the starting point.
(Note that the assumption that we start \autoref{alg:locus-generation}
at a non-singular initial time precludes that we start at an intersection
of irreducible components.)
\end{assumption}

\begin{assumption}
\label{ass:real-component}
We assume that $\mathcal{C}$ has a real connected component. Otherwise,
the locus is not a real connected component of a real plane algebraic
curve and the locus generation algorithm is not applicable.
\end{assumption}

\begin{lemma}
\label{lem:real-coefficients}
Under \autoref{ass:irreducible} and \autoref{ass:real-component},
without loss of generality, $f (x,y)$ has only real coefficients.
\end{lemma}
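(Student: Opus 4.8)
The plan is to compare $f$ with its complex conjugate polynomial $\bar f$, obtained by conjugating every coefficient, and to show that the two must be associates in $\mathbb{C}[x,y]$. First I would observe that conjugation of coefficients is a ring automorphism of $\mathbb{C}[x,y]$ fixing $\mathbb{R}[x,y]$ pointwise; consequently $\bar f$ is again irreducible by \autoref{ass:irreducible}. Next, for every real point $(x_0, y_0)$ on the real connected component guaranteed by \autoref{ass:real-component}, conjugating the equation $f(x_0, y_0) = 0$ yields $\bar f(x_0, y_0) = \overline{f(x_0, y_0)} = 0$, since $x_0$ and $y_0$ are real. Hence $f$ and $\bar f$ vanish simultaneously on the entire real connected component.

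The crucial observation is that this real connected component is a one-dimensional arc of the locus and therefore contains infinitely many points. Two polynomials in $\mathbb{C}[x,y]$ with no common factor have only finitely many common zeros (for instance because $\mathbb{C}[x,y]$ is a unique factorization domain, so coprimality forces $V(f,\bar f)$ to be zero-dimensional, or because the resultant with respect to one variable is then a nonzero polynomial in the other). Since $f$ and $\bar f$ share infinitely many zeros, they cannot be coprime; as both are irreducible, they must be associates, say $\bar f = \lambda f$ for some $\lambda \in \mathbb{C}^{\times}$.

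It then remains to normalize $\lambda$ away. Conjugating $\bar f = \lambda f$ once more gives $f = \bar\lambda\,\bar f = \bar\lambda\lambda f$, so $|\lambda| = 1$; write $\lambda = e^{i\theta}$. Setting $g := e^{i\theta/2} f$ and computing $\bar g = e^{-i\theta/2}\bar f = e^{-i\theta/2}e^{i\theta}f = e^{i\theta/2}f = g$ shows that $g$ has only real coefficients while defining the same curve $\mathcal{C}$. Replacing $f$ by $g$ establishes the claim.

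I expect the main obstacle to be the justification that the real connected component is genuinely infinite rather than a single isolated real point: only a one-dimensional real component supplies the infinitely many common zeros needed to rule out coprimality of $f$ and $\bar f$. This is the point at which \autoref{ass:real-component} must be read as furnishing a real component of dimension one, namely an arc of the locus actually traced by the algorithm, so that the dichotomy between coprimality and infinitely many shared zeros forces $f$ and $\bar f$ to share an irreducible factor.
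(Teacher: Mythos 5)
Your proof is correct, but it takes a genuinely different route from the paper's. The paper splits $f = f_\Re + \mathrm{i} f_\Im$ into real and imaginary part polynomials with real coefficients, observes that on the real connected component both $f_\Re$ and $f_\Im$ must vanish, and concludes from the infinitude of common zeros that $f_\Im$ is a real multiple of $f_\Re$, so that $f = (1 + \mathrm{i}\lambda) f_\Re$ is a unit multiple of a real polynomial. You instead compare $f$ with its coefficientwise conjugate $\bar f$: the same infinitude of real zeros rules out coprimality, irreducibility makes $f$ and $\bar f$ associates with $\bar f = e^{\mathrm{i}\theta} f$, and the rotation $g := e^{\mathrm{i}\theta/2} f$ yields a real defining polynomial for the same curve. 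Both arguments hinge on the same two facts --- the real component supplies infinitely many common zeros, and coprime polynomials in $\mathbb{C}[x,y]$ have only finitely many common zeros --- but your version is cleaner at one delicate point: irreducibility of $\bar f$ is immediate, since coefficientwise conjugation is a ring automorphism of $\mathbb{C}[x,y]$, whereas the paper asserts that $f_\Re$ and $f_\Im$ are irreducible ``by \autoref{ass:irreducible}'', which does not literally follow (that assumption concerns $f$, not its real and imaginary parts; the paper's step can be repaired by noting that a common non-constant factor of $f_\Re$ and $f_\Im$ would divide $f$, hence be an associate of $f$, forcing $f_\Re$ and $f_\Im$ to be constant multiples of $f$). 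Your closing caveat is shared by the paper rather than being a defect of your argument alone: the paper, too, reads \autoref{ass:real-component} as providing a one-dimensional real branch (``over a real interval of $x$-values''), not an isolated real point, and that is exactly where the infinitely many common zeros come from.
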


\begin{proof}
Conversely, suppose $f(x,y)$ has complex coefficients. Then we can write
$f(x,y)$ in the form
$f(x,y) = f_\Re(x,y) + \mathrm{i} f_\Im(x,y)$ such that the real part
polynomial $f_\Re(x,y)$ and the imaginary part polynomial $f_\Im(x,y)$
possess only real coefficients.
By the assumption that $f(x,y)$ has complex coefficients, $f_\Im(x,y)$
does not vanish identically. If $f_\Re(x,y)$ vanishes identically, then
$f(x,y) = \mathrm{i} f_\Im(x,y)$ and we can cancel the unit $\mathrm{i}$
from the equation $f(x,y) = 0$ to obtain an equation for $\mathcal{C}$
with only real coefficients. Hence, suppose that both real part polynomial
and imaginary part polynomial do not vanish
identically. By~\autoref{ass:real-component}, $\mathcal{C}$ has a
real connected component, i.e.\ over a real interval of $x$-values, $f(x,y)$
vanishes for real $y$-values. This can only be the case if $f_\Re(x,y)$
and $f_\Im(x,y)$ vanish there, i.e.\ if $f_\Re(x,y)$ and $f_\Im(x,y)$
have infinitely many common zeros.
If $f_\Re(x,y)$ and $f_\Im(x,y)$ have infinitely many common zeros,
then they must have a common component or $f_\Im(x,y)$ must be a
non-zero real multiple of $f_\Re(x,y)$. By \autoref{ass:irreducible},
$f_\Re(x,y)$ and $f_\Im(x,y)$ are irreducible; they cannot have a common
component. Therefore, $f_\Im(x,y)$ must be a non-zero real multiple
of $f_\Im(x,y)$.
Hence, we can write $f_\Im(x,y) = \lambda \cdot f_\Re(x,y)$ for
some $\lambda \in \mathbb{R}$. Therefore, $f(x,y) = (1 + \mathrm{i}
\lambda) f_\Re(x,y)$, i.e.\ $f(x,y)$ has only real coefficients up to
multiplication by a unit in $\mathbb{C}$, which we can cancel from the
equation $f(x,y) = 0$.
\end{proof}

\begin{lemma}
\label{lem:orientation}
Consider a plane algebraic curve $\mathcal{K}\colon p (x, y) = 0$, where
$p (x, y)$ is a polynomial with only real-valued coefficients. The paths
of the $y$-values on the complex plane algebraic curve $\mathcal{K}$
under complex conjugate movement of $x$ are complex conjugate.
\end{lemma}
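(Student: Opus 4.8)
The plan is to reduce everything to a single algebraic fact that the real coefficients supply. Since $p(x,y)$ has only real coefficients, conjugating the arguments conjugates the value: $\overline{p(x,y)} = p(\bar x, \bar y)$ for all $x, y \in \mathbb{C}$. I would verify this first by the one-line computation on monomials $\overline{a_{ij}\, x^i y^j} = a_{ij}\, \bar x^i \bar y^j$ (valid because $a_{ij}\in\mathbb{R}$) and summing. In particular a point $(x,y)$ lies on $\mathcal{K}$ if and only if its conjugate $(\bar x, \bar y)$ does; this is the only place where the hypothesis on the coefficients enters.

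Next I would make the phrase ``path of $y$-values'' precise. Let the $x$-coordinate trace a path $\gamma(t)$ in the complex plane and let $y(t)$ be the corresponding path on $\mathcal{K}$, so that $p(\gamma(t), y(t)) = 0$ for all $t$ and $y$ depends continuously on $t$ (indeed locally analytically, by the implicit function theorem away from the finitely many branch points of the projection $(x,y)\mapsto x$). Tracing such a path is exactly analytic continuation of a chosen branch of $y$ as a function of $x$, and the crucial structural input is that this continuation is unique along any path avoiding branch points.

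The core of the argument is then short. I would consider the conjugate movement $\tilde\gamma(t) := \overline{\gamma(t)}$ of the $x$-coordinate and set $\tilde y(t) := \overline{y(t)}$. Applying the identity from the first step gives
\[
  p\bigl(\tilde\gamma(t), \tilde y(t)\bigr) = p\bigl(\overline{\gamma(t)}, \overline{y(t)}\bigr) = \overline{p\bigl(\gamma(t), y(t)\bigr)} = 0,
\]
so $\tilde y$ is a path on $\mathcal{K}$ lying over the conjugate $x$-movement $\tilde\gamma$. Since complex conjugation is continuous (and since the conjugate-reflection $w \mapsto \overline{g(\bar w)}$ of a local analytic branch $g$ is again analytic), $\tilde y$ has the same regularity as $y$. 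By uniqueness of the lift starting from the conjugate initial value $\overline{y(t_0)}$, the path $\tilde y$ is precisely the $y$-path produced under the conjugate $x$-movement. Hence the two $y$-paths are complex conjugates of each other, as claimed.

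The point requiring care is the branch structure: the identification of $\tilde y$ as \emph{the} $y$-path under conjugate movement rests on uniqueness of analytic continuation, which can fail at branch points of the projection to the $x$-axis. I would handle this by working along paths that avoid these finitely many points --- precisely the situation secured by the choice of complex detours (cf.\ \autoref{ass:at-most-one-singularity}) --- and by noting that the two movements start from conjugate initial data, so that the starting branches correspond under conjugation; in the intended application the initial point is real, so the two movements even share the same starting point and the conclusion about real arcs is immediate.
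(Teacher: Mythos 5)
Your proof is correct and follows essentially the same route as the paper's: the key identity $\overline{p(x,y)} = p(\overline{x}, \overline{y})$ for real-coefficient polynomials, applied to the parameterized paths $x(t)$, $y(t)$ to conclude that $\overline{y(t)}$ is the path of $y$-values lying over the conjugate movement $\overline{x(t)}$. If anything, your handling of the identification step --- uniqueness of analytic continuation along paths avoiding branch points, with conjugate (in the application, real) initial data --- is more careful than the paper's parenthetical remark that the $y$-values ``must agree at all real points'' of $\mathcal{K}$.
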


\begin{proof}
$\mathcal{K}$ is the zero set of a
polynomial $p(x,y)$ with only real coefficients. Real coefficients are
invariant under complex conjugation. Hence, if we consider the complex
conjugate of the defining equation of $\mathcal{K}$, we find that \[0 =
\overline{0} = \overline{p(x,y)} = p(\overline{x}, \overline{y}),\] and
$p(\overline{x}, \overline{y})$ vanishes if and only if $p(x,y)$ vanishes.

Let $x(t)$ be a parameterization of the movement of $x$ through the
complex plane. Let $y(t)$ be a parameterization of the corresponding
analytic motion of a $y$-value so that $p(x(t), y(t)) = 0$, for all $t$.
Then to complex conjugate movement of $x$, parameterized by
$\overline{x(t)}$, corresponds complex conjugate motion of the
$y$-value, parameterized by $\overline{y(t)}$, since \[p(\overline{x(t)},
\overline{y(t)}) = 0 \Leftrightarrow p(x(t), y(t)) = 0.\]
(The $y$-values must be the same in both cases since they must agree
at all real points of the complex plane algebraic curve $\mathcal{K}$,
where complex conjugation has no effect.) \qedhere
\end{proof}

\begin{remark}
\label{rem:separate-or-together}
Suppose we can construct the tracer from the mover (and possibly further
elements) using only geometric operations that have an algebraic
representation. Moreover, suppose that the motion of the mover is
rationally parameterized in time parameter $t$. Then the $x$-coordinate
and the $y$-coordinate of the tracer satisfy algebraic equations $g(t,
x) = 0$ and $h(t, y) = 0$.
In practice, it may often be too expensive to work these equations out
symbolically. But in principle, we can determine an equation $f(x, y) = 0$
for the locus of the tracer by taking the $t$-resultant of $g(t, x)$
and $h(t, y)$ to eliminate $t$ from these equations.
\end{remark}

\begin{theorem}
\label{thm:orientation}
The locus generation algorithm is independent of the clockwise or
anticlockwise orientation of its complex detours.
\end{theorem}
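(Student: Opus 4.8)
\section*{Proof proposal}

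The plan is to exploit the reflection symmetry of the detours about the real axis together with \autoref{lem:orientation}. First I would record the elementary but decisive geometric fact: every detour circle $c$ is symmetric with respect to the real axis, since its diameter runs between the two real times $t$ and $t' = t + s\varepsilon$, so that its centre $a$ is real and complex conjugation maps $c$ onto itself while reversing orientation. Consequently the anticlockwise and the clockwise traversals of $c$ (or of the two real-endpoint semicircles of $c$, in case the algorithm reaches the opposite diameter endpoint before completing the full circle) are complex conjugate paths: if $\theta \mapsto t(\theta) = a + r e^{\mathrm{i}\theta}$ parameterizes the anticlockwise detour, then $\theta \mapsto \overline{t(\theta)} = a + r e^{-\mathrm{i}\theta}$ parameterizes the clockwise one, and both begin at the same real time.

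Next I would transfer this symmetry to the tracer. By \autoref{rem:separate-or-together} the coordinates of the tracer satisfy algebraic equations $g(t,x) = 0$ and $h(t,y) = 0$; because the motion of the mover is rationally parameterized and the construction uses only real geometric operations, the elimination (resultants) of real data stays real, so $g$ and $h$ have real coefficients — the analogue for $g,h$ of \autoref{lem:real-coefficients}. Applying \autoref{lem:orientation} to $\mathcal{K}\colon g(t,x) = 0$ and to $\mathcal{K}\colon h(t,y) = 0$, with $t$ in the role of the independent variable, I conclude that under the conjugate time-path the analytic continuations of $x(t)$ and $y(t)$ become exactly $\overline{x(t)}$ and $\overline{y(t)}$. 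In other words, the clockwise run produces, at every sampling point, the complex conjugate of what the anticlockwise run produces.

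I would then finish by induction over the successive detour circles, showing that the clockwise run is the pointwise conjugate of the anticlockwise run and hence generates the same locus. The base case is the initial time $t_0$, where both $t$ and the tracer are real and therefore self-conjugate. For the inductive step, the two runs start a given detour from the same real state; by the previous paragraph they traverse conjugate paths and arrive with conjugate tracer coordinates. A locus point is recorded precisely when the tracer has real coordinates, on which conjugation acts trivially, so both runs record the same real point of the $(x,y)$-plane and resume from the same real state, with the direction variable $s$ (respectively the update $s = (t-a)/|t-a|$, with $a$ real) likewise conjugate, keeping the two runs synchronized. Note that in Variant~B the complex times at which a point is recorded may be conjugate rather than equal; but since the locus is a set of tracer positions and the recorded positions are real, the two loci still coincide.

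The hard part will be the branch bookkeeping: I must ensure that ``selection of the right instance by proximity'' genuinely realizes analytic continuation, so that \autoref{lem:orientation} applies to the branch the algorithm actually follows, and that conjugate detours enclose conjugate ramification points consistently, so that the inductive synchronization survives a singular detour. This is exactly where \autoref{ass:at-most-one-singularity} through \autoref{ass:no-miss} enter: because the detours wind around at most one ramification point and only around ones at which the tracer is real — and the ramification points of a real-coefficient equation are themselves real or occur in conjugate pairs carrying conjugate, hence equal and real, tracer values — the monodromy met by the clockwise run is the inverse of that met by the anticlockwise run on conjugate data, which is precisely what preserves the conjugation symmetry across a singular detour.
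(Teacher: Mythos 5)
Your proposal is correct and takes essentially the same route as the paper's own proof: reduce to real-coefficient equations $g(t,x)=0$ and $h(t,y)=0$ via \autoref{rem:separate-or-together} and the analogue of \autoref{lem:real-coefficients}, then apply \autoref{lem:orientation} to conclude that reversed (i.e.\ complex conjugate) detours produce complex conjugate coordinate paths, whose real values therefore agree. Your extra bookkeeping---the symmetry of the detour circles about the real axis, the induction over successive detours, and the Variant~B subtlety that recording times may be conjugate rather than equal---merely makes explicit what the paper compresses into the single sentence that reversing the orientation of the detours yields complex conjugate movement of $t$.
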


\begin{proof}
By \autoref{rem:separate-or-together}, the $x$-coordinate of the tracer
is determined by an algebraic equation $g (t, x) = 0$. Analogously
to \autoref{ass:irreducible}, \autoref{ass:real-component}, and
\autoref{lem:real-coefficients}, we may assume that $g (t, x)$ has only
real coefficients, without loss of generality.
If we reverse the orientation of the complex detours, we obtain complex
conjugate movement of time parameter $t$.
By \autoref{lem:orientation}, the motion of $x$ such that $g (t, x) =
0$ under complex conjugate movement of $t$ is complex
conjugate. Particularly, the real values of $x$ resulting from either
movement agree.
The same argument applies to the algebraic equation $h (t, y) = 0$
that governs the motion of $y$ under movement of $t$.
Consequently, \autoref{alg:locus-generation} produces the same real
algebraic locus, independent of the clockwise or anticlockwise orientation
of its complex detours.
\end{proof}

\section{Termination}

Does the locus generation algorithm always terminate? Or can we get lost
on algebraic Riemann surfaces?

\noindent Analogously to the previous section, we assume that $f(x,y)$ is an
irreducible polynomial (cf.~\autoref{ass:irreducible}) with real coefficients
(cf.~\autoref{ass:real-component} and \autoref{lem:real-coefficients}).

Besides, recall the assumptions on which the locus generation algorithm
(\autoref{alg:locus-generation}) is based: Firstly, we assume that the complex
detours of \autoref{alg:locus-generation} are small enough so that they
wind around at most one ramification point of a coordinate of the tracer,
and only around ramification points at which the position of the tracer
is real-valued (\autoref{ass:at-most-one-singularity}).
Thus we preclude that the tracer jumps from one real arc of the real
algebraic locus to another due to a complex detour around a ramification
point of a coordinate at which the tracer has a complex position close
to the real algebraic locus.
Secondly, we assume that the steps we take on the complex detours are
small enough so that we can choose the right instance of the construction
by proximity (\autoref{ass:tracing-by-proximity}) and so that we do not
miss a real position of the tracer (\autoref{ass:no-miss}).

A proof of termination of \autoref{alg:locus-generation} has remained
elusive so far.
However, the successful application of Variant~A of
\autoref{alg:locus-generation} in Cinderella supports the following
conjecture:

\begin{conjecture}
\label{cnj:termination}
The locus generation algorithm terminates if it takes small enough
complex detours and small enough steps on every complex detour.
\end{conjecture}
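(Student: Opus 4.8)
The plan is to reduce the dynamics of \autoref{alg:locus-generation} to a flow on a compact real one-manifold and then invoke compactness. First I would build the Riemann surface on which the tracer moves. By \autoref{rem:separate-or-together} the tracer coordinates satisfy algebraic equations in $t$, and by \autoref{ass:irreducible} together with \autoref{lem:real-coefficients} the curve swept out by $\bigl(t, x(t), y(t)\bigr)$ is irreducible with real coefficients. Let $X$ be its normalization, a smooth irreducible projective real algebraic curve carrying the anti-holomorphic involution $\sigma$ of complex conjugation, and let $\pi\colon X \to \mathbb{P}^1$ be the projection to the $t$-line, a finite branched cover. Since $\pi$ commutes with $\sigma$, the real part $X_{\mathbb{R}} = \operatorname{Fix}(\sigma)$ maps into the real $t$-axis, so $X_{\mathbb{R}}$ is precisely the set of real tracer positions together with their real times. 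As the fixed locus of an anti-holomorphic involution on a smooth projective curve, $X_{\mathbb{R}}$ is a compact $1$-manifold, hence a disjoint union of finitely many circles (ovals).

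The second step is to identify the continuous idealization of the algorithm with the traversal of a single oval. Over a real $t$-interval free of branch points, $\pi$ restricts to a covering and the algorithm advances monotonically along one sheet, i.e.\ along an arc of $X_{\mathbb{R}}$ on which $\pi$ is a local diffeomorphism. At a real branch point the local model is a simple fold of $\pi\vert_{X_{\mathbb{R}}}$ with $t$ at a local extremum and the tracer real-valued. Here \autoref{ass:at-most-one-singularity} is exactly what I need: it guarantees that a small detour encircles only this one ramification point, and only one at which the tracer is real, so the monodromy of a full loop in $t$ transposes precisely the two colliding real sheets, returns $t$ to its starting value on the detour circle, and reverses the sign of $s$. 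This is the analytic reflection of passing through the fold, so the continuous trajectory never leaves the oval $O$ through the starting point $p_0$; in particular it cannot jump to another oval or wander onto a purely complex sheet, which is exactly the worry about ``getting lost on Riemann surfaces''. Because $p_0$ lies over the non-singular initial time, $\pi\vert_O$ is a local diffeomorphism there, and the sign of $dt$ --- hence $s$ --- is well defined at $p_0$.

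Third, I would invoke compactness to conclude termination of the idealized algorithm. The oval $O$ is a circle and $\pi\vert_O$ has only finitely many folds, so traversing $O$ in a fixed orientation is a finite, deterministic, fixed-point-free motion --- the tracer never stalls --- that returns to $p_0$ after one full loop, arriving with the same tangent direction and therefore with $s = 1$. Since a geometric point of $X$ over the regular time $t_0$ is determined by $(t, \text{tracer})$, the full stopping test (that $t = t_0$, the tracer is at its initial position, and $s = 1$) is met only at this return and not prematurely, so the continuous algorithm halts. Finally I would transfer this to the discrete algorithm actually executed: by \autoref{ass:tracing-by-proximity} the proximity rule reproduces analytic continuation, and by \autoref{ass:no-miss} sufficiently small steps visit every fold and every real position, so for small enough detours and step sizes the sampled trajectory stays in a controlled tube around $O$ and performs the same single traversal.

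The hard part --- and the reason this remains a conjecture --- is making ``small enough'' uniform and rigorous. One must produce a single detour radius and step size that work simultaneously along the entire oval, even though the admissible radius is constrained by the distance to the nearest other, possibly complex, branch point, and the admissible step shrinks near the square-root-type folds where the tracer moves fastest; a compactness argument over the finitely many folds is plausible but delicate, and \autoref{ass:tracing-by-proximity} is itself hard to certify, as already noted. Turning \autoref{ass:at-most-one-singularity} into a verifiable bound on the detour radius, and controlling the accumulated discretization error uniformly, are therefore the crux. A further genuine gap is the behaviour at $t = \infty$: if $O$ passes through a point over $t = \infty$, the continuous traversal is still finite, but the fixed real-axis stepping would need infinitely many steps to reach it, so one must either restrict to bounded loci or reformulate the marching on $\mathbb{P}^1$ near infinity; for Variant~B the additional need to locate complex times with real tracer compounds this discretization difficulty.
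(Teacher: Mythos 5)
The statement you set out to prove is not proved in the paper at all: it is \autoref{cnj:termination}, and the sentence immediately preceding it reads ``A proof of termination of \autoref{alg:locus-generation} has remained elusive so far.'' So there is no proof of record to compare yours against; the only question is whether your argument settles the open problem, and it does not. The first part of your plan --- normalization $X$, the real locus $\operatorname{Fix}(\sigma)$ as a finite union of ovals, finitely many folds of the projection to the $t$-line --- is a sound picture of the \emph{continuous} idealization of Variant~A. But the passage from that idealization to the discrete algorithm (a single detour radius compatible with \autoref{ass:at-most-one-singularity} along the whole oval, a single step size compatible with \autoref{ass:tracing-by-proximity} and \autoref{ass:no-miss}, control of accumulated error so that the sampled trajectory performs exactly one traversal and the stopping test fires only at the return to $p_0$) is precisely the content of the conjecture, and you concede it as ``the hard part and the reason this remains a conjecture.'' An argument whose unresolved step is the statement itself is a reformulation, not a proof.

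Beyond that conceded quantitative gap, two steps of your reduction are wrong as stated. First, your invariant set does not work for Variant~B: points visited by Variant~B can have real tracer coordinates but non-real $t$ --- this is the entire point of \autoref{sec:circle-line-projection} --- and such points are not fixed by $\sigma$, hence lie off $X_{\mathbb{R}}$. The correct invariant set, $\{p \in X : x(p), y(p) \in \mathbb{R} \cup \{\infty\}\}$, is compact but in general not a $1$-manifold: in that example it is the union of the real $t$-axis and the unit circle $|t| = 1$ in the $t$-plane, crossing at the singularities $t = \pm 1$, so a ``flow on a disjoint union of circles'' argument does not apply and one must separately analyze the branching rule $s = (t-a)/|t-a|$ at the crossings. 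Second, your local model at a real branch point silently assumes the fold is simple. \autoref{ass:at-most-one-singularity} bounds how many ramification points a detour winds around, not their ramification index. Take $g(t,x) = x^3 - t$: the detour over $t = 0$ winds around exactly one ramification point, at which the tracer coordinate $x = 0$ is real, so the assumption is satisfied; yet the monodromy is a $3$-cycle, one full loop lands on a non-real sheet, and analytic continuation does \emph{not} connect the two real arcs through the point. The idealized algorithm then circles the detour several times and reverses direction rather than passing through, so the trajectory is not a monotone traversal of the oval, the generated set may be a proper subset of it, and both the inner loop (repeated circuits of one detour circle) and the global back-and-forth dynamics require a termination argument you have not given. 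Since the paper's own angle-trisection example shows that degree-three phenomena are within the scope of these constructions, this case cannot simply be dismissed as non-generic.
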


\section{Generation of real connected components}

\begin{conjecture}
\label{cnj:connected-component}
Variant~B of \autoref{alg:locus-generation} generates an entire real
connected component of a real algebraic locus if it takes small
enough complex detours and small enough steps on every complex detour.
\end{conjecture}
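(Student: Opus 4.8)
The plan is to pass from the sampled trajectory of \autoref{alg:locus-generation} to the geometry of a single compact Riemann surface and to read off coverage of a real connected component from the behaviour of a lifted path there. This reduces the dynamical question about the algorithm to a topological question about a branched cover and its real structure.

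First I would build the ambient object. By \autoref{rem:separate-or-together} the tracer correspondence is cut out by algebraic equations $g(t,x)=0$ and $h(t,y)=0$, which (as in \autoref{ass:irreducible}, \autoref{ass:real-component}, and \autoref{lem:real-coefficients}) we may take to have real coefficients and to be irreducible. Let $X$ be the smooth projective model of the resulting function field, together with the two morphisms $\pi\colon X\to\mathbb{P}^1$ recording the time $t$ and $\tau\colon X\to\mathcal{C}$ recording the position of the tracer. Irreducibility makes $X$ connected, $\pi$ is a branched cover whose branch points over $\mathbb{R}$ are exactly the singular times of the construction, and complex conjugation (real coefficients) induces a single anti-holomorphic involution $\sigma$ on $X$ conjugating $t$, $x$, and $y$ simultaneously. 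The set that Variant~B actually detects is $R:=\tau^{-1}\bigl(\mathcal{C}(\mathbb{R})\bigr)$, the locus of real tracer positions; it is a $\sigma$-invariant real-analytic curve in $X$ that contains the fixed locus $X^\sigma$ (real time \emph{and} real tracer, the points seen by Variant~A) together with extra arcs carrying genuinely complex $t$, exchanged in conjugate pairs by $\sigma$. Each real connected component of the locus is an oval $\Gamma\subset\mathcal{C}(\mathbb{R})$, and I want to show that the recorded points fill one full preimage circle of $\Gamma$ in $R$.

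Second I would reinterpret the algorithm as path lifting. The successive circular detours of \autoref{alg:locus-generation} concatenate to a path in the complex $t$-plane that, under the steering rule $s=(t-a)/\lvert t-a\rvert$, advances along (a neighbourhood of) the real axis with a semicircular excursion around each singular time. \autoref{ass:tracing-by-proximity} is precisely the statement that selection-by-proximity reproduces analytic continuation, so the sequence of sampled positions is the true lift $\gamma$ of this path to $X$; \autoref{ass:no-miss} guarantees that as the step size $\varepsilon\to0$ no crossing of $\gamma$ with $R$ is skipped, and \autoref{ass:at-most-one-singularity} guarantees that no excursion carries $\gamma$ across a branch point onto the wrong sheet or the wrong oval. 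Thus ``detect a real tracer'' becomes ``$\gamma$ meets $R$'', and the goal is the topological statement that, as $\varepsilon\to0$, $\gamma$ meets every arc of the distinguished preimage circle of $\Gamma$, while the stopping test ($t=t_0$, initial direction, tracer back at its start) fires exactly when that circle has been traversed once. For this I would use connectedness of $X$ and transitivity of the $\pi$-monodromy to argue that the excursions reach every sheet lying over the swept region, together with a compactness-and-finiteness argument (finitely many branch points, $X$ compact) to force the combinatorial crossing pattern to be eventually periodic, yielding termination in the spirit of \autoref{cnj:termination}.

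The hard part will be the quantitative link between ``small enough'' detours and global coverage --- and this is, I expect, exactly why the statement is only a conjecture. There is no a priori reason that the complex times at which the tracer is real stay within reach of detours hugging the real axis: an arc of $\Gamma$ may correspond to $t$-values far out in $\mathbb{C}$, and one must prove that every such arc is nevertheless connected to the starting arc through a chain of admissible small excursions governed by the steering rule, without the dynamics either stalling in a proper sub-loop or drifting onto a different component. Pinning this down seems to require a genuinely global description of $R$ inside the branched cover $X$ and a modulus-of-continuity estimate relating $\varepsilon$ to the minimal spacing of branch points and to the speed of the analytic continuation --- information that the purely local assumptions \autoref{ass:tracing-by-proximity}, \autoref{ass:no-miss}, and \autoref{ass:at-most-one-singularity} do not obviously supply. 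I would therefore expect a complete argument to hinge on establishing this reachability-and-termination statement for the steering dynamics, with the Riemann-surface setup and the path-lifting dictionary serving only as the comparatively routine scaffolding.
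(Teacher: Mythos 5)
The statement you were asked to prove is, in the paper, only a conjecture: the paper offers no proof of it (nor of the companion \autoref{cnj:termination}), supporting it solely by the practical success of Cinderella's implementation. So there is no paper proof to compare against; the question is whether your proposal closes the gap the paper leaves open. It does not, and you say so yourself in your final paragraph. Your first two steps --- the smooth projective model $X$ with its anti-holomorphic involution $\sigma$, the locus $R=\tau^{-1}\bigl(\mathcal{C}(\mathbb{R})\bigr)$ of real tracer positions, and the dictionary translating the algorithm's sampled trajectory into a lifted path $\gamma$ via \autoref{ass:tracing-by-proximity}, \autoref{ass:no-miss}, and \autoref{ass:at-most-one-singularity} --- are sensible scaffolding, and they make explicit more structure than the paper itself does. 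But they are exactly that: scaffolding. The entire mathematical content of the conjecture is the claim you defer, namely that the steering rule $s=(t-a)/\lvert t-a\rvert$ forces $\gamma$ to sweep out the full preimage circle of the oval $\Gamma$ and that the stopping test fires precisely when it has done so.

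The one place where you gesture at an actual mechanism, the appeal to ``connectedness of $X$ and transitivity of the $\pi$-monodromy,'' would fail as stated. Irreducibility gives transitivity of the \emph{full} monodromy group, generated by loops around \emph{all} branch points of $\pi$; but the detours the algorithm actually executes are constrained by \autoref{ass:at-most-one-singularity} to encircle only one ramification point at a time, and only ramification points at which the tracer is real-valued, along loops hugging the real time axis. These generate in general a proper subgroup of the monodromy group, and nothing in your setup shows that this subgroup acts transitively on the sheets that matter, nor that the arcs of $R$ carrying non-real $t$ (the ones Variant~B must find and Variant~A misses, as in \autoref{sec:circle-line-projection}) are reachable by such restricted excursions --- you yourself concede these arcs may correspond to $t$-values far from the real axis. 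Likewise your compactness-and-finiteness argument for eventual periodicity is asserted rather than made: the algorithm's state includes the continuous position of $t$ and the chosen sheet, so finiteness of the branch locus alone does not force a periodic crossing pattern, and the limit $\varepsilon\to 0$ does not obviously commute with the discrete stopping test. The proposal is an honest research plan whose routine parts are fine, but the conjecture is exactly as open after it as before.
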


\begin{remark}
Variant~A of \autoref{alg:locus-generation} need not generate an entire
real connected component of a real algebraic locus. It may miss real arcs
of a locus that correspond to non-real complex values of time parameter
$t$. For an example, see \autoref{sec:circle-line-projection}.
\end{remark}

\begin{remark}
A real connected component of a real algebraic locus need not be an
algebraic curve by itself. For example, consider a four-bar linkage with
bars of lengths $4$, $1$, $4$, and $2$. (We discuss how we can
express a mechanical linkage in terms of dynamic geometry in
\autoref{sec:watt-curves}.)
We leave the first bar of the linkage fixed and trace the midpoint of
the third bar under continuous movement of the linkage.
Then the four-bar linkage generates one of the two real connected
components of the plane algebraic sextic
\begin{align*}
\mathcal{C}\colon f(x,y) &= {(6 + 5 x - 2 x^3)}^2 + 3 (-45 + 4 x (-2 +
2 x + x^3)) y^2\\&\phantom{=}\qquad+ 4 (11 + 3 x^2) y^4 + 4 y^6 = 0.
\end{align*}
The algebraic curve $\mathcal{C}$ is irreducible (over the complex
numbers). Thus, each of the two real connected components by itself
cannot be an
algebraic curve.
\end{remark}

\section{Examples}

\subsection{Conic through five points in general position}

Consider Pascal's theorem (also known as `hexagrammum mysticum'):

\begin{theorem}[Pascal's theorem]
\label{thm:pascal}
If $A, B, C, D, E, F$ are six points on a conic, then opposite sides
of the hexagon $ABCDEF$ (extended to lines, if necessary) meet in three
collinear points.
\end{theorem}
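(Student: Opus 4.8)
The plan is to give the classical projective proof via a pencil of cubics, working projectively so that parallel lines cause no trouble: points at infinity are admitted, and the asserted collinearity includes the possibility that the three points all lie on the line at infinity. I first name the three intersection points of opposite sides,
\[
P := AB \cap DE, \qquad Q := BC \cap EF, \qquad R := CD \cap FA,
\]
and I assume the conic $\mathcal{Q}$ through $A,\dots,F$ is nondegenerate (irreducible) and that the six points are distinct and in general position; the degenerate case, where the conic splits into two lines, reduces to Pappus's theorem and can be handled separately.

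The key device is to split the six sides into two alternating triples and multiply the corresponding linear forms. Writing $\ell_{XY}$ for a linear form vanishing on the line through $X$ and $Y$, I form the two cubic curves
\[
C_1 = \ell_{AB}\,\ell_{CD}\,\ell_{EF}, \qquad C_2 = \ell_{BC}\,\ell_{DE}\,\ell_{FA}.
\]
A direct check shows that each of the nine points $A,B,C,D,E,F,P,Q,R$ lies on \emph{both} cubics: every vertex lies on two of the six sides, one drawn from each triple, and each of $P,Q,R$ is by construction the meet of one side from $C_1$ with one side from $C_2$.

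Now I bring in the conic. Choosing a seventh point $G$ on $\mathcal{Q}$ distinct from the six vertices, I select a nontrivial combination $C_3 = \lambda C_1 + \mu C_2$ that also vanishes at $G$; since $C_1(G),\,C_2(G) \neq 0$ in general position and $C_1,\,C_2$ are not proportional, $C_3$ is a \emph{nonzero} cubic vanishing at the seven points $A,B,C,D,E,F,G$ of $\mathcal{Q}$. Because an irreducible conic and a cubic sharing more than $2\cdot 3 = 6$ points must share a component, $\mathcal{Q}$ is a component of $C_3$, so $C_3 = \mathcal{Q}\cdot \ell$ for some line $\ell$. Finally, $C_1$ and $C_2$ both vanish at $P,Q,R$, hence so does $C_3$; and since a line meets the irreducible conic in at most the two vertices it already contains, none of $P,Q,R$ lies on $\mathcal{Q}$. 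Therefore $P,Q,R$ all lie on the residual line $\ell$ and are collinear.

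The main obstacle I anticipate is not the algebra but the bookkeeping of degeneracies: I must ensure that $C_1$ and $C_2$ are genuinely distinct cubics, that the auxiliary point $G$ avoids all six chosen lines, and that none of $P,Q,R$ accidentally lands on $\mathcal{Q}$. Each of these is guaranteed by the general-position hypothesis, but would require separate treatment—via limiting arguments or by passing to the Pappus configuration—once the hexagon is allowed to degenerate.
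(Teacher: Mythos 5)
Your proof is correct, but note that the paper itself does not prove Pascal's theorem at all: it states the theorem and immediately defers to Richter-Gebert's \emph{Perspectives on Projective Geometry} (Sections~1.4 and~10.6) for proofs, since the theorem only serves as background for the organic conic-generation example in \autoref{con:organic-conic}. Your argument is the classical pencil-of-cubics (Cayley--Bacharach style) proof: splitting the six sides into the two alternating triples $C_1 = \ell_{AB}\ell_{CD}\ell_{EF}$ and $C_2 = \ell_{BC}\ell_{DE}\ell_{FA}$, forcing a combination $\lambda C_1 + \mu C_2$ to vanish at a seventh point of the conic, and invoking B\'ezout to split off the conic and leave a residual line through $P$, $Q$, $R$. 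The details check out: each of the nine points does lie on both cubics, the seventh point $G$ exists because the six sides meet the irreducible conic in at most twelve points, and your observation that $P$, $Q$, $R$ cannot lie on the conic (a side meets it only in its two vertices, and the two sides through each of $P$, $Q$, $R$ share no vertex) correctly forces them onto the residual line. This route buys a short, self-contained algebraic proof at the cost of general-position hypotheses and separate treatment of degeneracies, which you acknowledge; the proofs in the cited reference are of a different, more synthetic-projective flavour (cross-ratios and bracket/determinant identities), which handle the projective setting more uniformly. Either way, your proposal supplies something the paper deliberately omits rather than paralleling an argument in it.
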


\noindent For more details and proofs,
see~\cite{RichterGebert2011}*{esp.~Section~1.4 and Section~10.6}.

The converse of the theorem is also true, which gives rise to organic
generation of a conic through five points, by the following construction
(see~\autoref{fig:organic-conic}).

\begin{figure}[ht]
\centering
\begin{tikzpicture}[
    extended line/.style={shorten >=-#1,shorten <=-#1},
    extended line/.default={0.75cm}
]
\draw[name path=s,very thick,lightgray] (0,0) circle[x radius=3,y radius=2];
\coordinate (A) at ({3*cos(140)},{2*sin(140)});
\coordinate (B) at ({3*cos(-80)},{2*sin(-80)});
\coordinate (C) at ({3*cos(50)},{2*sin(50)});
\coordinate (D) at ({3*cos(-130)},{2*sin(-130)});
\coordinate (E) at ({3*cos(95)},{2*sin(95)});
\draw[extended line] (A) -- (B);
\draw[extended line] (D) -- (E);
\coordinate (F) at (intersection of A--B and D--E);
\coordinate (c1) at ($(F)+(3:4.5)$);
\coordinate (c2) at ($(F)+(183:2)$);
\path[name path=c1c2] (c1) -- (c2);
\path[name intersections={of=s and c1c2}];
\coordinate (cc1) at (intersection-1);
\coordinate (cc2) at (intersection-2);
\draw[extended line] (cc1) -- (cc2);
\draw[extended line] (B) -- (C);
\coordinate (G) at (intersection of c1--c2 and B--C);
\draw[extended line] (C) -- (D);
\coordinate (H) at (intersection of c1--c2 and C--D);
\coordinate (K) at (intersection of A--H and E--G);
\draw[extended line] (A) -- (K);
\draw[extended line] (E) -- (K);
\foreach \p in {A,B,C,D,E,F,G,H,K}
    \draw[draw=black,fill=white] (\p) circle (0.075);
\draw (A) node[below=2] {$A$};
\draw ($(B)+(-0.05,0.1)$) node[above=2] {$B$};
\draw (C) node[right=2] {$C$};
\draw (D) node[left=2] {$D$};
\draw (E) node[below=2] {$E$};
\draw (F) node[above=2] {$F$};
\draw ($(G)+(-0.05,0)$) node[above=2] {$G$};
\draw (H) node[above=2] {$H$};
\draw ($(K)+(0.05,0)$) node[above=2] {$K$};
\draw ($(A)!1.22!(B)$) node {$a$};
\draw ($(D)!1.25!(E)$) node {$b$};
\draw ($(cc2)+(183:0.95)$) node {$c$};
\draw ($(B)!1.25!(C)$) node {$d$};
\draw ($(C)!1.2!(D)$) node {$e$};
\draw ($(A)!1.17!(K)$) node {$f$};
\draw ($(K)!1.22!(E)$) node {$g$};
\end{tikzpicture}
\caption{An instance of \autoref{con:organic-conic}. When line~$c$
rotates about point~$F$, point~$K$ traces the conic through points $A,
B, C, D, E$.}
\label{fig:organic-conic}
\end{figure}

\begin{construction}
\label{con:organic-conic}
Let $A, B, C, D, E$ be five points of the real projective plane, in
general position.
\begin{enumerate}
\item Let $a$ be the line through $A$ and $B$.
\item Let $b$ be the line through $D$ and $E$.
\item Let $F$ be the intersection of $a$ and $b$.
\item Let $c$ be a line through $F$.
\item Let $d$ be the line through $B$ and $C$.
\item Let $G$ be the intersection of $c$ and $d$.
\item Let $e$ be the line through $C$ and $D$.
\item Let $H$ be the intersection of $c$ and $e$.
\item Let $f$ be the line through $A$ and $H$.
\item Let $g$ be the line through $E$ and $G$.
\item Let $K$ be the intersection of $f$ and $g$.
\end{enumerate}
When line $c$ rotates about point $F$, point $K$ traces the conic
through points $A, B, C, D, E$.
\end{construction}

\begin{remark}
\autoref{con:organic-conic} does not distinguish the orientation of line
$c$. Point $K$ returns to its initial position after half a turn of line
$c$ about point $F$. If line $c$ makes a full turn, point $K$ traces
the conic through points $A, B, C, D, E$ twice.
\end{remark}

\subsection{Orthogonal projection of a circle onto a line}
\label{sec:circle-line-projection}

We consider the following (seemingly simple) construction
(see~\autoref{fig:circle-line-projection}), because it
highlights the difference between Variant~A and Variant~B of
\autoref{alg:locus-generation}.

\begin{figure}[ht]
\centering
\begin{tikzpicture}[
    extended line/.style={shorten >=-#1,shorten <=-#1},
    extended line/.default={0.75cm}
]
\draw (0,0) circle (1.5);
\coordinate (A) at (30:1.5);
\coordinate (A1) at ($(30:1.5)+(-3,0)$);
\coordinate (A2) at ($(30:1.5)+(2,0)$);
\draw (A1) -- (A2);
\coordinate (B1) at (2.5,-2);
\coordinate (B2) at (2.5,2);
\draw (B1) -- (B2);
\draw[very thick,lightgray] (2.5,1.5) -- (2.5,-1.5);
\draw[dashed,help lines,thick] (0,1.5) -- (2.5,1.5);
\draw[dashed,help lines,thick] (0,-1.5) -- (2.5,-1.5);
\coordinate (B) at (intersection of A1--A2 and B1--B2);
\foreach \p in {A,B}
    \draw[draw=black,fill=white] (\p) circle (0.075);
\draw (A) node[above right] {$A$};
\draw (B) node[above right] {$B$};
\draw (A1) node[left] {$a$};
\draw (B2) node[above] {$b$};
\draw (-135:1.75) node {$c_0$};
\end{tikzpicture}
\caption{An instance of \autoref{con:circle-line-projection}.
When point $A$ moves around on circle $c_0$, point $B$ traces a segment
of line $b$.}
\label{fig:circle-line-projection}
\end{figure}

\clearpage

\begin{construction}
\label{con:circle-line-projection}
Let a line $b$, a circle $c_0$, and a point $A$ on circle $c_0$ be given.
\begin{enumerate}
\item Let $a$ be the line through $A$ perpendicular to $b$.
\item Let $B$ be the intersection of $a$ and $b$.
\end{enumerate}
When point $A$ moves around on circle $c_0$, point $B$ traces a segment
of line $b$.
\end{construction}

\begin{remark}
For simplicity, we use the geometric primitive `perpendicular to a line
through a point`. It can be easily constructed with compass and
straightedge (see~Book~I, Proposition~12 of Euclid's Elements).
\end{remark}

\begin{remark}
Line $a$ intersects circle $c_0$ in two points (counted with
multiplicity). Hence there are two positions of point $A$ (counted with
multiplicity) for every position of point $B$ on the segment that point
$B$ traces on line $b$. In other words, point $B$ covers the segment
twice as point $A$ makes a full turn on circle~$c_0$.
\end{remark}

\noindent Let us work out the real algebraic locus of point $B$ under
movement of point $A$ on circle $c_0$ algebraically. Without loss of
generality, let $c_0$ be the unit circle and $b$ the line parallel to
the $y$-axis intersecting the $x$-axis at $x = 2$. Let~$O$ denote the origin.
We need to parameterize the motion of point $A$ on circle~$c_0$. To that
end, we can use trigonometric functions, as follows:
\[A = {(\cos \varphi, \sin \varphi)}^\top, \quad -\pi \leq \varphi \leq \pi.\]
However, this parameterization is not rational. We use tangent half-angle
substitution,
\[t = \tan \frac{\varphi}{2}, \quad \cos \varphi = \frac{1 - t^2}{1 +
t^2}, \quad \sin \varphi = \frac{2t}{1 + t^2},\]
and homogeneous coordinates to derive the rational parameterization
\[A = {(1 - t^2, 2t, 1 + t^2)}^\top, \quad t \in \mathbb{R}.\]
Line $b$ has homogeneous coordinates \[b = {(1, 0, -2)}^\top.\]
Line $a$ is perpendicular to line $b$ and therefore has homogeneous
coordinates of the form \[a = {(0, 1, z(t))}^\top,\] where $z(t)$ has to
be determined so that point $A$ lies on line $a$, i.e.\ so that
\[\langle a, A\rangle = 2t + z(t) (1 + t^2) = 0.\]
Hence, homogeneous coordinates of line $a$ are \[a = {(0, 1 + t^2,
-2t)}^\top.\]
We obtain homogeneous coordinates of point $B$ by taking the cross
product of line $a$ and line $b$,
\[B = a \times b = {(2 (1 + t^2), 2t, 1 + t^2)}^\top \sim {\left(2,
\frac{2t}{1 + t^2}, 1\right)}^\top.\]
The real algebraic locus of point $B$ under movement of point $A$ on
circle $c_0$ is described by the implicit equations
\begin{gather*}
x - 2 = 0\\
t^2 y - 2 t + y = 0.
\end{gather*}
If we take the $t$-resultant of these equations, we arrive at a single
equation for the real algebraic locus,
\[{(x - 2)}^2 = 0.\]

\noindent The construction has singularities at $t = \pm 1$, where the
$y$-coordinate of point $B$ equals $\pm 1$. If we solve the equation
between $t$ and the $y$-coordinate of point $B$, \[t^2 y - 2t + y = 0,\]
for $t$, we find that
\[t = \frac{1 \pm \sqrt{1 - y^2}}{y}.\]
For real $y$-values of absolute values greater than $1$, $t$ becomes
complex. Point $B$ moves higher or lower than the (real-valued) extreme
positions of point $A$ on circle $c_0$ if and only if we allow point $A$
to become complex.

Variant~A of \autoref{alg:locus-generation} does not allow this to
happen and skips the branch of the real algebraic locus where point $B$ has
real-valued coordinates, but point $A$ (and thus $t$) is complex-valued.
It generates that part of the real algebraic locus where both mover and
tracer have real-valued coordinates.

Variant~B of \autoref{alg:locus-generation} does not skip the branch of
the real algebraic locus where point $B$ has real-valued coordinates,
but point $A$ (and thus $t$) is complex-valued. It generates the entire
real algebraic locus.

Variant~A seems more appropriate from the perspective of real projective
geometry; Variant~B seems more appropriate from the algebraic perspective.

\subsection{Conchoid of Nicomedes}

The conchoids of Nicomedes are a family of quartic plane algebraic curves
\[\mathcal{C}\colon f(x,y) = {(y + a)}^2 (x^2 + y^2) - b^2 y^2 = 0, \quad a,
b > 0.\]
For organic generation of a conchoid, we can use the following construction.

\begin{figure}[ht]
\centering
\begin{tikzpicture}[
    extended line/.style={shorten >=-#1,shorten <=-#1},
    extended line/.default={0.75cm}
]
\def\r{2}
\def\alpha{30}
\coordinate (A) at (-1,0);
\coordinate (B) at (\alpha:\r);
\draw (-4.7,0) -- (7.15,0);
\draw (B) -- ({\r*cos(\alpha)},0) node[midway,right] {$a$};
\draw[name path=c] (A) circle (\r);
\draw[name path=g,shorten >=-3cm,shorten <=-1cm] (B) -- (A);
\path[name intersections={of=c and g}];
\coordinate (C) at (intersection-1);
\coordinate (D) at ($(A)!-1!(C)$);
\path (A) -- (D) node[midway,sloped,below] {$b$};
\draw[very thick,domain=-77.4:74,samples=100,lightgray] plot
({\r*cos(\alpha)+(\r*sin(\alpha)/cos(\x)+\r)*sin(\x)},
{\r*sin(\alpha)-(\r*sin(\alpha)/cos(\x)+\r)*cos(\x)});
\draw[very thick,domain=-83.23:82.3,samples=100,lightgray] plot
({\r*cos(\alpha)+(\r*sin(\alpha)/cos(\x)-\r)*sin(\x)},
{\r*sin(\alpha)-(\r*sin(\alpha)/cos(\x)-\r)*cos(\x)});
\foreach \p in {A,B,C}
    \draw[draw=black,fill=white] (\p) circle (0.075);
\draw (3,0) node[below] {$g$};
\draw ($(A)!1.4!(B)$) node {$h$};
\draw (A) node[below] {$A$};
\draw (B) node[above] {$B$};
\draw ($(C)+(-0.1,0)$) node[above right] {$C$};
\draw ($(A)+(-30:\r)$) node[right] {$c_0$};
\end{tikzpicture}
\caption{An instance of \autoref{con:conchoid}.
When point $A$ moves along line~$g$, point $C$ traces the conchoid
with \emph{pole}~$B$, \emph{base}~$g$ and \emph{distance}~$b$.}
\label{fig:conchoid}
\end{figure}

\begin{construction}[conchoid of Nicomedes]
\label{con:conchoid}
Let $A$ be a point on a line $g$. Let $B$ be a point at distance $a >
0$ from $g$. Let $c_0$ be a circle of radius $b$ centred at $A$.
\begin{enumerate}
\item Let $h$ be the line through $A$ and $B$.
\item Let $C$ be an intersection of $c_0$ and $h$.
\end{enumerate}
When point $A$ moves along line $g$, point $C$ traces the conchoid
with \emph{pole}~$B$, \emph{base}~$g$, and \emph{distance}~$b$.
\end{construction}

\noindent As mentioned in the introduction, part of the original
motivation to study the conchoid of Nicomedes was that it can be used
for angle trisection. The following construction is
based on~\cite{EFMR}*{Conchoïde de Nicomède,
\url{http://www.mathcurve.com/courbes2d/conchoiddenicomede/conchoiddenicomede.shtml}}.

\begin{figure}[ht]
\centering
\begin{tikzpicture}[
    extended line/.style={shorten >=-#1,shorten <=-#1},
    extended line/.default={0.75cm}
]
\def\r{2}
\def\alpha{30}
\coordinate (E) at (0,0);
\coordinate (D) at (\alpha:\r);
\coordinate (F) at ({\r+1},0);
\coordinate (G) at ({180-\alpha/3}:\r);
\coordinate (H) at (intersection of E--F and D--G);
\draw (D) -- (E);
\draw (E) -- (G);
\draw[extended line] (D) -- (H);
\draw[extended line] (H) -- (6.4,0);
\draw (E) circle (\r);
\draw[very thick,domain=-77.4:74,samples=100,lightgray] plot
({\r*cos(\alpha)+(\r*sin(\alpha)/cos(\x)+\r)*sin(\x)},
{\r*sin(\alpha)-(\r*sin(\alpha)/cos(\x)+\r)*cos(\x)});
\draw[very thick,domain=-83.23:82.3,samples=100,lightgray] plot
({\r*cos(\alpha)+(\r*sin(\alpha)/cos(\x)-\r)*sin(\x)},
{\r*sin(\alpha)-(\r*sin(\alpha)/cos(\x)-\r)*cos(\x)});
\foreach \p in {D,E,F,G,H}
    \draw[draw=black,fill=white] (\p) circle (0.075);
\draw ($(D)+(0.075,0)$) node[above] {$D$};
\draw (E) node[below] {$E$};
\draw (F) node[below right] {$F$};
\draw (G) node[above left] {$G$};
\draw (H) node[below] {$H$};
\draw ($(E)+(-30:\r)$) node[right] {$c_1$};
\draw ($(E)!1.75!(F)$) node[below] {$l$};
\draw ($(G)!1.27!(D)$) node {$m$};
\end{tikzpicture}
\caption{An instance of \autoref{con:trisection}. By construction,
angle $\angle HEG$ trisects angle $\angle DEF$.}
\label{fig:trisection}
\end{figure}

\begin{construction}[angle trisection]
\label{con:trisection}
Let $\angle DEF$ be the angle to be trisected. We can trisect a right
angle by constructing an equilateral triangle. Hence, without loss of
generality, let angle $\angle DEF$ be acute.
\begin{enumerate}
\item Let $l$ be the line through $E$ and $F$.
\item Use \autoref{con:conchoid} to generate the conchoid with pole $D$,
base $l$, and distance $|DE|$.
\item Let $c_1$ be the circle centred at $E$, through $F$. 
\item Let $G$ be the intersection of $c_1$ with the conchoid that
lies on the same side of $l$ as $F$.
\item Let $m$ be the line through $D$ and $G$.
\item Let $H$ be the intersection of $l$ and $m$.
\end{enumerate}
Then angle $\angle HEG$ trisects angle $\angle DEF$.
\end{construction}

\begin{proof}
By construction of the conchoid, points $H$ and $E$ are equidistant to
point~$G$. Therefore, triangle $\triangle EGH$ is equilateral and \[\angle
HEG = \angle GHE.\] We apply the exterior angle theorem to triangle
$\triangle EGH$ and find that \[\angle DGE = \angle GHE + \angle HEG =
2 \cdot \angle HEG.\]
Triangle $\triangle GED$ is equilateral by construction, and thus
\[\angle EDG = \angle DGE = 2 \cdot \angle HEG.\]
We apply the exterior angle
theorem to triangle $\triangle HED$ and conclude
\begin{align*}
\angle DEF &= \angle GHE + \angle EDH = \angle GHE + \angle EDG = \angle
HEG + 2 \cdot \angle HEG\\
&= 3 \cdot \angle HEG. \qedhere
\end{align*}
\end{proof}

\subsection{Watt curves}
\label{sec:watt-curves}

We can use \autoref{alg:locus-generation} to simulate
mechanical linkages. For example, the following construction uses a
four-bar linkage to generate a Watt curve, a plane algebraic sextic
\[\mathcal{C}\colon f(x,y) = (x^2 + y^2) {(x^2 + y^2 - a^2 - b^2 +
c^2)}^2 + 4 a^2 y^2 (x^2 + y^2 - b^2) = 0\] with parameters $a, b, c > 0$.

\begin{figure}[ht]
\centering
\begin{tikzpicture}
\def\a{2}
\def\b{2.5}
\def\c{1.5}
\coordinate (A) at (-\a,0);
\coordinate (B) at (\a,0);
\coordinate (C) at ($(A)+(100:\b)$);
\draw[help lines,black] (A) circle (\b);
\draw[name path=cB,help lines,black] (B) circle (\b);
\draw[name path=cC,help lines,black] (C) circle ({2*\c});
\path[name intersections={of=cC and cB}];
\coordinate (D) at (intersection-1);
\coordinate (E) at ($(C)!0.5!(D)$);
\draw[thick] (A) -- (C) -- (D) -- (B) -- cycle;
\draw[very thick,domain=-138.5945:-41.4045,samples=150,lightgray] plot
(xy polar cs:angle=\x,
radius={sqrt(\b^2-(sqrt(\c^2-\a^2*cos(\x)^2)+\a*sin(\x))^2)});
\draw[very thick,domain=41.4055:53.1305,samples=150,lightgray] plot
(xy polar cs:angle=\x,
radius={sqrt(\b^2-(sqrt(\c^2-\a^2*cos(\x)^2)+\a*sin(\x))^2)});
\draw[very thick,domain=126.87:138.595,samples=150,lightgray] plot
(xy polar cs:angle=\x,
radius={sqrt(\b^2-(sqrt(\c^2-\a^2*cos(\x)^2)+\a*sin(\x))^2)});
\draw[very thick,domain=-138.5945:-41.4045,samples=150,lightgray] plot
(xy polar cs:angle=\x,
radius={-sqrt(\b^2-(sqrt(\c^2-\a^2*cos(\x)^2)+\a*sin(\x))^2)});
\draw[very thick,domain=41.4055:53.1305,samples=150,lightgray] plot
(xy polar cs:angle=\x,
radius={-sqrt(\b^2-(sqrt(\c^2-\a^2*cos(\x)^2)+\a*sin(\x))^2)});
\draw[very thick,domain=126.87:138.595,samples=150,lightgray] plot
(xy polar cs:angle=\x,
radius={-sqrt(\b^2-(sqrt(\c^2-\a^2*cos(\x)^2)+\a*sin(\x))^2)});
\foreach \p in {A,B,C,D,E}
    \draw[draw=black,fill=white] (\p) circle (0.075);
\draw (A) node[below] {$A$};
\draw (B) node[below] {$B$};
\draw (C) node[above] {$C$};
\draw (D) node[pin=75:$D$] {};
\draw (E) node[pin={$E$}] {};
\draw ($(A)+(135:\b)$) node[left] {$c_0$};
\draw ($(B)+(45:\b)$) node[right] {$c_1$};
\draw ($(C)+(45:{2*\c})$) node[right] {$c_2$};
\end{tikzpicture}
\caption{An instance of \autoref{con:watt-curve}.
When point $C$ moves on circle~$c_0$ according to
\autoref{alg:locus-generation}, point $E$ traces a Watt curve with
parameters $a = |AB|/2$, $b = |AC| = |BD|$, and $c = |CD|/2$.}
\label{fig:watt-curve}
\end{figure}
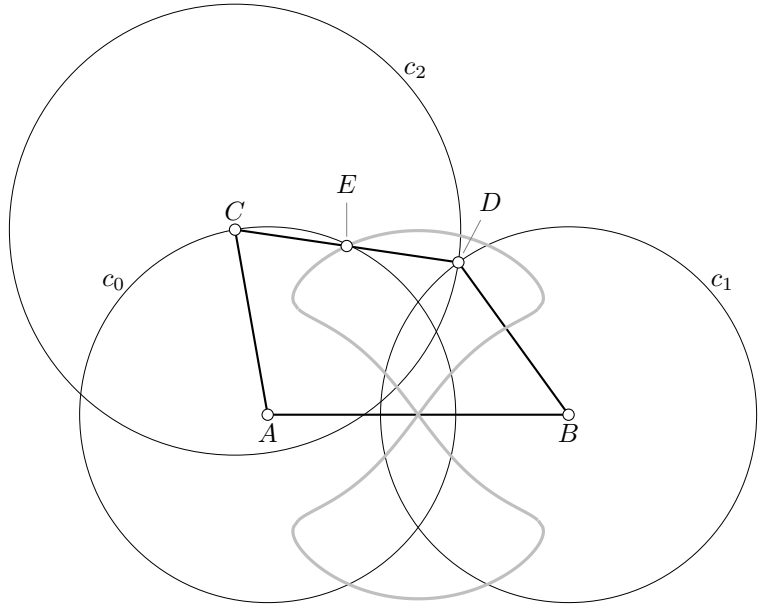

\begin{construction}
\label{con:watt-curve}
Let $A$ and $B$ be two points in the plane at distance $2 a$ from each
other. Let $c_0$ be a circle with centre $A$ and radius $b$. Let $c_1$
be a circle with centre $B$ and radius $b$.
\begin{enumerate}
\item Let $C$ be a point on $c_0$.
\item Let $c_2$ be a circle with centre $C$ and radius $2 c$.
\item Let $D$ be an intersection of $c_1$ and $c_2$.
\item Let $E$ be the midpoint of the segment between $C$ and $D$.
\end{enumerate}
When point $C$ moves on circle $c_0$ according to
\autoref{alg:locus-generation}, point $E$ traces a Watt curve with
parameters $a, b, c$.
\end{construction}

\begin{remark}
The first bar of the four-bar linkage, segment $AB$, has length $2
a$. Circles $c_0$, $c_1$, and $c_2$ have fixed radii. Hence, they
prescribe the lengths $|AC| = b$, $|CD| = 2c$, and $|DB| = b$ of the
remaining bars of the four-bar linkage.
\end{remark}

\begin{remark}
Depending on parameters $a, b, c$, Watt curves have a wide variety of
different shapes.
\end{remark}

\begin{remark}
In \autoref{fig:watt-curve}, we choose parameters $a, b, c$ so that
$a > c$. Therefore, the movement of point $C$ on circle $c_0$ is
constrained. If point $C$ moved too far to the left, then circles $c_1$
and $c_2$ would move apart and would no longer have real-valued
intersections. Such movement is not possible without breaking the linkage.
\autoref{alg:locus-generation} resolves the singularities when circles
$c_1$ and $c_2$ merely touch each other, by taking complex detours
around them. At every such singularity, point $C$ reverses its direction
of movement on circle $c_0$.
Apparently, \autoref{alg:locus-generation} produces a physically
reasonable motion of the four-bar linkage.
\end{remark}

\section*{Funding}

This research was supported by DFG Collaborative Research Center TRR 109,
``Discretization in Geometry and Dynamics''.

\begin{bibdiv}
\begin{biblist}

\bib{BrieskornKnoerrer1986}{book}{
    title={Plane algebraic curves},
    author={Brieskorn, Egbert},
    author={Knörrer, Horst},
    translator={Stillwell, John},
    language={English},
    date={1986},
    publisher={Birkhäuser},
    address={Basel}
}

\bib{EFMR}{misc}{
    title={Encyclopédie des formes mathématiques remarquables},
    author={Ferréol, Robert},
    author={Mandonnet, Jaques},
    url={http://www.mathcurve.com},
    note={\url{http://www.mathcurve.com}},
    date={2005}
}

\bib{Kortenkamp1999}{thesis}{
	title={Foundations of Dynamic Geometry},
	author={Kortenkamp, Ulrich},
	type={dissertation},
	organization={ETH Zürich},
	date={1999},
	address={Zurich}
}

\bib{KortenkampRichterGebert2001b}{article}{
	title={Grundlagen dynamischer Geometrie},
    author={Kortenkamp, Ulrich},
    author={Richter-Gebert, Jürgen},
	book={
		title={Zeichnung -- Figur -- Zugfigur},
		subtitle={Mathematische und didaktische Aspekte
		dynamischer Geometrie-Software},
		editor={Elschenbroich, H.-J.},
		editor={Gawlick, Th.},
		editor={Henn, H.-W.},
		publisher={Franzbecker},
		address={Hildesheim}
	},
	pages={123--144},
	date={2001}
}

\bib{KortenkampRichterGebert2002}{article}{
    title={Complexity issues in dynamic geometry},
    author={Kortenkamp, Ulrich},
    author={Richter-Gebert, Jürgen},
    book={
        title={Festschrift in the honor of Stephen Smale's 70th birthday},
        editor={Rojas, M.},
        editor={Cucker, Felipe},
        publisher={World Scientific}
    },
    pages={355--404},
    date={2002}
}

\bib{KortenkampRichterGebert2006}{misc}{
	title={Cinderella},
	subtitle={The interactive geometry software},
	author={Kortenkamp, Ulrich},
	author={Richter-Gebert, Jürgen},
	date={2006},
	note={\url{http://www.cinderella.de}}
}

\bib{Lindemann1882}{article}{
    title={Ueber die Zahl $\pi$},
    author={Lindemann, Ferdinand},
    journal={Mathematische Annalen},
    volume={20},
    number={2},
    pages={213--225},
    date={1882},
    doi={10.1007/BF01446522}
}

\bib{Loria1910}{book}{
    author={Loria, Gino},
    translator={Schütte, Fritz},
    language={German},
    title={Spezielle algebraische und transzendente ebene Kurven},
    subtitle={Theorie und Geschichte},
    volume={1},
    publisher={Teubner},
    address={Leipzig},
    edition={2},
    date={1910}
}

\bib{RichterGebert2011}{book}{
	title={Perspectives on Projective Geometry},
	subtitle={A Guided Tour Through Real and Complex Geometry},
	author={Richter-Gebert, Jürgen},
	date={2011},
	publisher={Springer},
	address={Berlin},
    doi={10.1007/978-3-642-17286-1}
}

\bib{RichterGebert2014}{misc}{
    author={Richter-Gebert, Jürgen},
    note={Personal communication},
    date={2014}
}

\bib{Wantzel1837}{article}{
    title={Recherches sur les moyens de reconnaître si un problème de
    Géométrie peut se résoudre avec la règle et le compas},
    author={Wantzel, Pierre Laurent},
    journal={Journal de Mathématiques Pures et Appliquées},
    volume={1},
    number={2},
    pages={366--372},
    date={1837}
}

\end{biblist}
\end{bibdiv}
\end{document}